\DeclarePairedDelimiter\ceil{\lceil}{\rceil}
\DeclarePairedDelimiter\floor{\lfloor}{\rfloor}
\DeclareMathOperator{\dist}{dist} 
\newtheorem{thm}{Theorem}
\newtheorem{cor}[thm]{Corollary}
\newtheorem{lem}[thm]{Lemma}
\newtheorem{conjecture}[thm]{Conjecture}
\theoremstyle{definition}
\theoremstyle{remark}
\newcommand{\D}{\Delta}
\date{\today}
\title{On the maximum order of graphs embedded in surfaces}
\author{Eran Nevo}
\address{Department of Mathematics, Ben-Gurion University of the Negev, Israel.}
\email{\texttt{nevoe@cs.bgu.ac.il}}
\author{Guillermo Pineda-Villavicencio}
\address{Centre for Informatics and Applied Optimisation,
Federation University Australia, Australia\\ \and  Department of Mathematics, Ben-Gurion University of the Negev, Israel.}
\email{\texttt{work@guillermo.com.au}}
\author{David R. Wood}
\address{School of Mathematical Sciences, Monash University, Australia.}
\email{\texttt{david.wood@monash.edu}}
\keywords{degree--diameter problem, graphs on surfaces, surface, vertex separator}
\subjclass[2000]{Primary 05C10; Secondary 05C35}
\begin{document}
\begin{abstract}
The maximum number of vertices in a graph of maximum degree $\Delta\ge 3$ and fixed diameter $k\ge 2$ is upper bounded by $(1+o(1))(\Delta-1)^{k}$.
  If we restrict our graphs to certain classes, better upper bounds are known. For instance, for the class of trees there is an upper bound of $(2+o(1))(\Delta-1)^{\lfloor k/2\rfloor}$ for a fixed $k$. The main result of this paper is that graphs  embedded in surfaces of bounded Euler genus $g$ behave like trees, in the sense that, for large $\Delta$, such graphs have orders bounded from above by 
  \[\begin{cases} c(g+1)(\Delta-1)^{\lfloor k/2\rfloor} & \text{if $k$ is even}\\
  c(g^{3/2}+1)(\Delta-1)^{\lfloor k/2\rfloor} & \text{if $k$ is odd},
  \end{cases}\]
 where $c$ is an absolute constant. This result represents a qualitative improvement over all previous results, even for planar graphs of odd diameter $k$. With respect to lower bounds, we construct graphs of Euler genus $g$, odd diameter $k$, and order $c(\sqrt{g}+1)(\Delta-1)^{\lfloor k/2\rfloor}$ for some absolute constant $c>0$. Our results answer in the negative a question of Miller and \v{S}ir\'a\v{n} (2005). \end{abstract}

\maketitle
 
 \section{Introduction}

The degree--diameter problem asks for the maximum number of vertices in a graph of  maximum degree $\Delta\ge 3$ and diameter $k\ge 2$. For general graphs the {\it Moore bound}, \[M(\Delta, k):=1+\Delta+\Delta(\Delta-1)+\ldots+\Delta(\Delta-1)^{k-1}= (1+o(1))(\Delta-1)^k (\text{for fixed $k$}),\] provides an upper bound for the order of such a graph.  The well-known de Bruijn graphs provide a lower bound of $\floor{\Delta/2}^{k}$ \cite{Bru46}. For background on this problem the reader is referred to the survey \cite{MS05a}.

If we restrict our attention to particular graph classes, better upper bounds than the Moore bound are possible.  For instance, a well-known result by Jordan \cite{Jordan1869} implies that every tree of maximum degree $\Delta$ and fixed diameter $k$ has at most $(2+o(1))(\Delta-1)^{\lfloor k/2\rfloor}$ vertices. For a graph class $\mathcal{C}$, we define $N(\Delta,k,\mathcal{C})$ to be the maximum order of a graph in $\mathcal{C}$ with  maximum degree $\Delta\ge 3$ and diameter $k\ge 2$. We say $\mathcal{C}$ has {\it small order} if there exists a constant $c$ and a function $f$ such that $N(\Delta,k,\mathcal{C})\le c(\Delta-1)^{\floor{k/2}}$, for all $\Delta\ge f(k)$. The class of trees is a prototype class of small order.

For the class $\mathcal{P}$ of planar graphs, Hell and Seyffarth \cite[Thm.~3.2]{HS93} proved that $N(\Delta,2,\mathcal{P})=\floor{\frac{3}{2}\Delta}+1$ for $\Delta\ge 8$. Fellows {\it et al.}~\cite[Cor.~14]{FHS95} subsequently showed that $N(\Delta,k,\mathcal{P})\le ck\Delta^{\lfloor {k}/{2}\rfloor}$ for every diameter $k$. Notice that this does not prove that $\mathcal{P}$ has small order. Restricting $\mathcal{P}$ to  even diameter assures small order, as shown by Tishchenko's upper bound of  $(\frac{3}{2}+o(1))(\Delta-1)^{{k}/{2}}$, whenever $\Delta \in\Omega(k)$ \cite[Thm.~1.1, Thm.~1.2]{Tis2011}.  Our first contribution is to prove that $N(\Delta,k,\mathcal{P})\le c(\Delta-1)^{\lfloor {k}/{2}\rfloor}$ for $k\ge 2$ and $\Delta \in\Omega(k)$. That is, we show that the class  of planar graphs has small order. 

We now turn our attention to the class $\mathcal{G}_\Sigma$ of graphs embeddable in a surface\footnote{A {\it surface} is a compact (connected) 2-manifold (without boundary). Every surface is homeomorphic to the sphere with $h$ handles or the sphere with $c$ cross-caps \cite[Thm~3.1.3]{MohTho01}. The sphere with $h$ handles has {\it Euler genus} $g:=2h$, while the sphere with $c$ cross-caps has {\it Euler genus } $g:=c$.  For a surface $\Sigma$ and a graph $G$ embedded in $\Sigma$, the (topologically) connected components  of $\Sigma-{G}$  are called {\it faces}. A face homeomorphic to the  open unit disc  is called {\it 2-cell}, and an embedding with only 2-cell faces is called a {\it  2-cell embedding}.  Every face in an embedding is bounded
by a closed walk called a {\it facial walk}.} $\Sigma$ of Euler genus $g$. For diameter 2 graphs, Knor and \v{S}ir\'a\v{n} \cite[Thm.~1, Thm.~2]{KS97} showed that $N(\Delta,2,\mathcal{G}_\Sigma)=N(\Delta,2,\mathcal{P})= \floor{\frac{3}{2}\Delta}+1$, provided $\Delta\in\Omega(g^2)$. \v{S}iagiov\'a and Simanjuntak \cite[Thm.~1]{SR04} proved for all diameters $k$ the upper bound \[N(\Delta,k,\mathcal{G}_\Sigma)\le c(g+1)k(\Delta-1)^{\left\lfloor {k}/{2}\right\rfloor}.\] 

The main contribution of this paper, Theorem~\ref{theo:GralSurface} below, is to show that the class of graphs embedded in a fixed surface $\Sigma$ has small order.
\begin{thm}\label{theo:GralSurface} 
There exists an absolute constant $c$ such that, for every surface $\Sigma$ of Euler genus $g$, 
\[N(\Delta,k,\mathcal{G}_\Sigma)\le\begin{cases}
c(g+1)(\Delta-1)^{\lfloor {k}/{2}\rfloor}& \text{if $k$ is even and $\Delta\ge c(g^{2/3}+1)k$,}\\
c(g^{3/2}+1)(\Delta-1)^{\lfloor {k}/{2}\rfloor}& \text{if $k$ is odd and $\Delta\ge 2k+1$.}
\end{cases}\]
\end{thm}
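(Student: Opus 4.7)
The plan is to emulate the tree strategy. For a tree of diameter $k$, picking a centre vertex (even $k$) or a centre edge (odd $k$) produces a BFS of depth $\lfloor k/2\rfloor$ that already reaches every vertex, and the standard layer bound $|L_i|\le\Delta(\Delta-1)^{i-1}$ gives an order of at most $c(\Delta-1)^{\lfloor k/2\rfloor}$. For a graph $G\in\mathcal{G}_\Sigma$ one cannot hope to find a single centre, but one can hope to find a \emph{small central set} $S\subseteq V(G)$, with $|S|=O(g+1)$ in the even case and $|S|=O(g^{3/2}+1)$ in the odd case, such that every vertex of $G$ lies within distance $\lfloor k/2\rfloor$ of some vertex of $S$. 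Granted such an $S$, a union bound over $S$ of the tree-type ball estimate gives
\[|V(G)|\le |S|\cdot\bigl(1+\Delta+\Delta(\Delta-1)+\cdots+\Delta(\Delta-1)^{\lfloor k/2\rfloor-1}\bigr)\le c\,|S|\,(\Delta-1)^{\lfloor k/2\rfloor},\]
which is exactly Theorem~\ref{theo:GralSurface}.

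So the whole argument reduces to producing $S$. My proposal is to fix a diametral pair $u,v\in V(G)$, consider the BFS layering from $u$, and use the embedding on $\Sigma$ to carve the graph by $O(g+1)$ shortest ``meridian'' paths of length $O(k)$. For even $k$, the vertices on these paths that lie in a central band of layers at depth roughly $k/2$ form the candidate $S$: the union of their distance-$\lfloor k/2\rfloor$ balls covers $V(G)$, and the naive estimate $|S|=O((g+1)k)$ collapses to $|S|=O(g+1)$ once the degree hypothesis $\Delta\ge c(g^{2/3}+1)k$ is used to absorb the stray factor of $k$. The carving step is the one genuinely topological ingredient; I would derive it either from a Gilbert--Hutchinson--Tarjan / Djidjev surface-separator theorem, or from an Eppstein-style tree-cotree decomposition in which a BFS tree is enlarged by $O(g)$ non-tree/non-cotree edges, each contributing a fundamental cycle of length $O(k)$.

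For odd $k$, I would centre the argument on a carefully chosen edge rather than a vertex; this is what produces $\lfloor k/2\rfloor$ instead of $\lceil k/2\rceil$ as the effective BFS depth. Matching pairs of edge-endpoints through the surface structure is more delicate than matching single vertices, and this should be the source of the extra $\sqrt{g+1}$ factor separating the $O(g+1)$ bound in the even case from the $O(g^{3/2}+1)$ bound in the odd case. The weaker degree hypothesis $\Delta\ge 2k+1$ then suffices because the larger genus factor already swallows the additive error terms that, in the even case, had to be absorbed by the stronger hypothesis on $\Delta$.

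The main obstacle is the construction of $S$ with the right size: showing $|S|=O(g+1)$ (resp.~$O(g^{3/2}+1)$) rather than the naive $O((g+1)k)$ requires a genuinely topological argument using the bounded Euler genus, and it is precisely at this step that the degree hypotheses $\Delta\ge c(g^{2/3}+1)k$ and $\Delta\ge 2k+1$ are consumed. Once $S$ is in hand, the remaining ball-counting is nothing more than the tree argument recalled in the introduction.
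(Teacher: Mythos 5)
Your reduction to finding a small set $S$ with $\bigcup_{s\in S}B_{\lfloor k/2\rfloor}(s)=V(G)$ is not what the paper does, and the step on which your whole plan turns --- shrinking $|S|$ from $O((g+1)k)$ to $O(g+1)$ --- is the precise point where the argument breaks, not a technicality to be swept into the degree hypothesis. A factor of $k$ that multiplies $(\Delta-1)^{\lfloor k/2\rfloor}$ cannot be ``absorbed'' by assuming $\Delta$ large: $k\,(\Delta-1)^{\lfloor k/2\rfloor}\le c\,(\Delta-1)^{\lfloor k/2\rfloor}$ forces $k\le c$. What the hypotheses $\Delta\ge c(g^{2/3}+1)k$ and $\Delta\ge 2k+1$ can absorb are \emph{additive} lower-order terms (and that is exactly how the paper uses them, see the sentence after Theorem~\ref{theo:mainSurface}), never a multiplicative $k$ on the leading term. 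As you describe it, your meridian-path construction reproduces at best the bound $c(g+1)k(\Delta-1)^{\lfloor k/2\rfloor}$ of \v{S}iagiov\'a--Simanjuntak, which is what this paper is trying to improve upon. Moreover, the existence of a radius-$\lfloor k/2\rfloor$ dominating set of size $O(g+1)$ (or $O(g^{3/2}+1)$) in every bounded-genus, bounded-degree, diameter-$k$ graph is a strong structural claim that you neither prove nor reduce to a known theorem; the paper nowhere asserts it and its proof does not produce one.

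The paper's actual route is a proof by contradiction built on a new $\ell$-separator theorem on surfaces (Theorem~\ref{thm:SeparatorSurface}, proved via a tree-decomposition lemma). Assuming $|V(G)|$ too large, one extracts a subgraph $S$ with $\ell+1$ faces, each containing at least $c(\Delta-1)^{\lfloor k/2\rfloor}$ ``deep'' vertices at distance $\ge\lfloor k/2\rfloor$ from the face boundary. Deep vertices in distinct faces must communicate within distance $k$, which forces either boundary intersections (even $k$) or jump edges (odd $k$); the number of face pairs is $\Theta(\ell^2)$, but Euler's formula caps the good pairs, the intersection sizes $|I_{ij}|$, and the jump edges at $O(\ell+g)$, yielding the contradiction. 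This combinatorial-topological double counting is the genuinely new idea here, and it is entirely absent from your sketch. Your speculation that ``matching edge-endpoints'' accounts for the $g^{3/2}$ in the odd case and that the ``larger genus factor swallows the error terms'' to permit the weaker hypothesis $\Delta\ge 2k+1$ has no mechanism behind it; in the paper the odd case requires a separate argument with jump edges, simplified configurations, and a refined count of normal versus special jump edges.
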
 

We now prove a lower bound on $N(\Delta,k,\mathcal{G}_\Sigma)$ for odd $k\geq3$ (see \cite{FerPin13} for a more complicated construction that gives the same asymptotic lower bound.)\ Let $g$ be the Euler genus of $\Sigma$.  It follows from the Map Colour Theorem \cite[Thm~4.4.5, Thm.~8.3.1]{MohTho01} that $K_p$ embeds in $\Sigma$ where $p\geq \sqrt{6g+9}$. Let $T$ be the rooted tree such that the root vertex has degree $\Delta-p+1$, every non-root non-leaf vertex has  degree $\Delta$, and the distance between the root and each leaf  equals $(k-1)/2$. Observe that $T$ has $(\Delta-p+1)(\Delta-1)^{(k-3)/2}$ leaf vertices.  For each vertex $v$ of $K_p$ take a copy of $T$ and identify the root of $T$ with $v$. The obtained graph embeds in $\Sigma$, has maximum degree $\Delta$, and has diameter $k$. The number of vertices is at least $p(\Delta-p+1)(\Delta-1)^{(k-3)/2}$. It follows that for odd $k$,  for all $\epsilon>0$ and sufficiently large $\Delta\geq\Delta(g,\epsilon)$, 
\begin{equation}
\label{LowerBound}
N(\Delta,k,\mathcal{G}_\Sigma)\geq(1-\epsilon)\sqrt{6g+9}\,(\Delta-1)^{(k-1)/2}.
\end{equation}
This lower bound is within a $O(g)$ factor of the upper bound in Theorem~\ref{theo:GralSurface}. Moreover, combined with the above upper bound for planar graphs, this result solves an open problem by Miller and \v{S}ira\v{n} \cite[Prob.~13]{MS05a}. They asked whether Knor and \v{S}ir\'a\v{n}'s result could be generalised as follows: is it true that, for each surface $\Sigma$ and for each diameter $k\ge 2$, there exists  $\Delta_0:=\Delta_0(\Sigma,k)$ such that $N(\Delta,k,\mathcal{G}_\Sigma)=N(\Delta,k,\mathcal{P})$ for $\Delta\ge \Delta_0$?   
We now give a negative answer to this question for odd $k$. 
Equation~\eqref{LowerBound} says that $N (\Delta, k, \mathcal{G}_\Sigma)/(\Delta-1)^{\lfloor{k/2}\rfloor}\ge c\sqrt{g}+1$, while Theorem~\ref{theo:GralSurface} with $g=0$ says that $N (\Delta, k, \mathcal{P})/(\Delta-1)^{\lfloor{k/2}\rfloor}\le c'$, for absolute constants $c$ and $c'$. Thus $N(\Delta,k,\mathcal{G}_\Sigma)> N(\Delta, k, \mathcal{P})$ for odd $k\ge 3$ and $g$ greater than some absolute constant.

In the literature all upper bounds for $N(\Delta,k,\mathcal{P})$ or $N(\Delta,k,\mathcal{G}_\Sigma)$ rely on graph separator theorems. Fellows {\it et al.} \cite[Cor.~14]{FHS95} used the graph separator theorem for planar graphs by Lipton and Tarjan \cite[Lem.~2]{LT79}, while Tishchenko used an extension of Lipton and Tarjan's theorem proved by himself in \cite[Cor.~3.3]{Tis2011a}. In the same vein, \v{S}iagiov\'a and Simanjuntak \cite{SR04} made use of Djidjev's separator theorem \cite[Lem.~3]{Dji85} for graphs on surfaces. Our proofs rely on a new graph separator theorem, also proved  in this paper, which extends Tischenko's separator theorem to all surfaces, and is of independent interest.   

In this paper we follow  the notation and terminology of \cite{Die05}. The remainder of the paper is organised as follows. Section~\ref{sec:l-separators} proves a separator theorem for graphs on surfaces. Section~\ref{sec:surface-graphs} is devoted to the proof of Theorem~\ref{theo:GralSurface}.  
Finally, Section~\ref{sec:conclusion} discusses some open problems arising as a result of our work.

\section{$\ell$-Separators in multigraphs on surfaces} 
\label{sec:l-separators}

A \emph{triangulation} of a
surface $\Sigma$ is a multigraph (without loops) embedded in $\Sigma$ such
that each face is bounded by exactly 3 edges. Let $\ell\in \mathbb{Z}^+$ and let $\Sigma$ be a surface of Euler genus $g$ and let $G$ be an $n$-vertex triangulation of $\Sigma$. The aim of this section is to find a ``small'' subgraph $S$ of $G$ with $\ell$ faces such that each face of $S$ contains ``many'' vertices of $G$.

A well-known result by Lipton and Tarjan {\cite[Lem.~2]{LT79} states that if $\ell=2$ then there exists a subgraph $S$ of order at most $(\ell-1)(2r+1)$ in every plane triangulation $G$ such that each face of $S$ contains at least $\frac{n}{2\ell-1}-|S|$ vertices of $G$. Here $r$ denotes the radius of $G$. Tishchenko \cite[Thm.~1.1,Thm.~1.2]{Tis2011} found such a subgraph $S$ in a  plane triangulation for every $\ell\ge2$. Tishchenko \cite{Tis2011} called such subgraphs {\it $\ell$-separators} by virtue of its number of faces. Our result extends Tishchenko's result to all surfaces.

A \emph{tree decomposition} of a multigraph $G$ is a pair $(T,\{B_z:z\in V(T)\})$ consisting of a tree $T$ and a collection of sets of vertices in $G$ (called  \emph{bags}) indexed by the nodes of $T$, such that:
\begin{enumerate}
\item $\bigcup\{B_z:z\in V(T)\}=V(G)$, and
\item for every edge $vw$ of $G$, some bag $B_z$ contains both $v$ and $w$, and 
\item for every vertex $v$ of $G$, the set $\{z\in V(T):v\in B_z\}$ induces a non-empty (connected) subtree of $T$.
\end{enumerate}
For a subtree $Q$ of $T$, let $G[Q]$ be the subgraph of $G$ induced by $$\bigcup\big\{B_z:z\in V(Q)\big\}\setminus\bigcup\big\{B_z:z\in V(T)\setminus V(Q)\big\}.$$
Thus a vertex $v$ of $G$ is in $G[Q]$ whenever $v$ is in some bag in $Q$ and is in no bag outside of $Q$. 

Our approach to finding an $\ell$-separator in an embedded multigraph is based on the following lemma for finding a separator in a multigraph with a given tree decomposition.
 
\begin{lem} 
\label{lem:TreeDecompSeparator}
Let $\ell\geq0$ and $b\geq2$ be integers. Let $G$ be a  multigraph with $n\geq (3\ell+1) b$ vertices. Let $(T,\{ B_z : z \in V(T) \})$ be a  tree decomposition of $G$, such that $T$ has maximum degree at most 3, and $|B_z|\leq b$ for each $z\in V(T)$. Then there is a set $R$ of exactly $\ell$ edges of $T$ such that for each of the $\ell+1$ components $Q$ of $T-R$, 
$$|G[Q]|\geq \frac{n-\ell b}{2\ell+1}.$$
\end{lem}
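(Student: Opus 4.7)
I prove the lemma by induction on $\ell$. The base case $\ell=0$ is immediate: $R=\emptyset$ leaves $T$ as the unique component with $|G[T]|=n$.

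For the inductive step, set $L:=(n-\ell b)/(2\ell+1)$. The plan is to find a single edge $e$ of $T$ whose removal creates components $T_1,T_2$ with $L\le|G[T_1]|\le 2L$. Once $e$ is found, the standard fact that cutting $e=\{u,v\}$ hides at most $|B_u\cap B_v|\le b$ vertices of $G$ (those in both endpoint bags) gives $|G[T_2]|\ge n-b-|G[T_1]|\ge[(2\ell-1)n-b]/(2\ell+1)$. A routine arithmetic check using $n\ge (3\ell+1)b$ shows this lower bound exceeds $(3(\ell-1)+1)b$, so the inductive hypothesis applies to the restricted tree decomposition $(T_2,\{B_z\cap V(G[T_2]):z\in V(T_2)\})$ of $G[T_2]$---which inherits maximum degree at most $3$ and bag-size at most $b$---with parameter $\ell-1$. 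This yields $\ell-1$ further edges that partition $T_2$ into components each of size at least $(|G[T_2]|-(\ell-1)b)/(2\ell-1)\ge L$, and together with $e$ these form the required $R$.

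The crux is the existence of $e$. Root $T$ at a vertex of degree $1$ so that every non-root node has at most two children, and let $g(z):=|G[T(z)]|$ for each non-root $z$, so that cutting the edge from $z$ to its parent gives $|G[T_1]|=g(z)$. The hypothesis $n\ge(3\ell+1)b$ forces $L\ge b$, so the target interval $[L,2L]$ has length at least $b$. Starting at the root's child, where $g\ge n-b\ge 2L$, descend $T$ by always moving to the child with the larger $g$-value. A single-child step drops $g$ by at most $b$ and so cannot leap over $[L,2L]$. At a two-child branching node $p$ where the descent would apparently jump from $g(p)>2L$ to a child with $g<L$, a short analysis gives $g(p)\in(2L,2L+b)$ with both children's $g$-values strictly below $L$ but summing to more than $2L-b$; the bound $L\ge b$ then forces some nearby cut---either the edge to a sibling subtree encountered earlier on the descent, or a cut involving $p$ and its parent exploiting the slack between $g(p)$ and $2L$---to produce $|G[T_1]|\in[L,2L]$.

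The main obstacle lies in this branching case: showing that the interplay between the bag-size bound $b$ and the hypothesis $n\ge(3\ell+1)b$ always allows one to circumvent the apparent ``overshoot'' of the heavy-path descent and to locate a genuine edge $e$ whose corresponding piece $|G[T_1]|$ lands safely in $[L,2L]$.
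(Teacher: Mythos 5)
Your inductive framework and the bookkeeping for the inductive step match the paper's, but the pivotal claim --- that there is always a single edge $e$ of $T$ with $L \le |G[T_1]| \le 2L$ --- is false, and the ``short analysis'' you invoke at branching nodes cannot be supplied. Take $T$ to be the complete binary tree of depth $3$ (fifteen nodes, maximum degree $3$) with pairwise disjoint bags of size exactly $b$, so $n = 15b$; this is a valid tree decomposition (of, say, a disjoint union of fifteen $b$-cliques). For $\ell = 4$ the hypothesis $n \ge (3\ell+1)b = 13b$ holds and $L = 11b/9$. Since the bags are disjoint, $|G[T_1]|$ equals $b$ times the number of nodes on the $T_1$-side, which over the fourteen edges of $T$ takes only the values $b, 3b, 7b, 8b, 12b, 14b$ --- none of which lie in $[11b/9, 22b/9]$. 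Your heavy-path descent drops from $3b$ directly to $b$, overshooting the target interval, and no sibling or parent cut helps since every cut is already accounted for in that list. Yet the lemma holds here: the four edges joining depth-$1$ nodes to depth-$2$ nodes split $T$ into five pieces with $3b$ vertices each.

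The paper avoids this obstacle by never upper-bounding $|G[T_1]|$. It orients each edge $xy$ as $\overrightarrow{xy}$ when $|G(x,y)| < L$, locates a leaf $x$ of the forest $J$ of sinks, and cuts the unique unoriented edge $xy$ at $x$. The bound $n' = |G(y,x)| \ge n - b - 2L$ is then derived not from an upper bound on $|G(x,y)|$, but from the partition $V(G) = B_x \cup G(y,x) \cup \bigcup_i G(x_i,x)$ over the at most two \emph{other} neighbours $x_i$ of $x$, each contributing fewer than $L$ vertices because those edges point toward the sink $x$. This is exactly where the maximum-degree-$3$ hypothesis is used and what makes the recursion close: in the example above the cut chosen by the paper has $|G[T_1]| = 3b > 2L$, yet $n' = 12b$ is still large enough. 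To repair your approach you would need to replace the target $|G[T_1]| \le 2L$ with the weaker but correct requirement $n' \ge \bigl((2\ell-1)n - b\bigr)/(2\ell+1)$ directly, at which point the branching-node analysis would effectively have to reproduce the paper's sink/leaf-of-$J$ argument.
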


\begin{proof} We proceed by induction on $\ell\geq0$. The base case with $\ell=0$ and $R=\emptyset$ is trivially true. Now assume that $\ell\geq1$. Observe that $|E(T)|\ge \ell$ since $n\ge (3\ell+1)b$ and each bag has size at most $b$. 

Consider an edge $xy$ of $T$. Let $T(x,y)$ and $T(y,x)$ be the subtrees of $T$ obtained by deleting the edge $xy$, where $T(x,y)$ contains $x$ and $T(y,x)$ contains $y$. 
Let $G(x,y):=G[T(x,y)]$ and $G(y,x):=G[T(y,x)]$. 
By part (3) of the definition of tree decomposition,  each vertex of $G$ is in either $G(x,y)$ or $G(y,x)$ or $B_x\cap B_y$. Orient each edge $xy$ of $T$ by $\overrightarrow{xy}$ if $$|G(x,y)|<\frac{n-\ell b}{2\ell+1}.$$

{\bf \noindent  Case 1.} Some edge $xy\in E(T)$ is oriented in both directions: Then $|G(x,y)|<\frac{n-\ell b}{2\ell+1}$ and 
$|G(y,x)|<\frac{n-\ell b}{2\ell+1}$. Thus $$n=|G(x,y)|+|G(y,x)|+|B_x\cap B_y|< 2\left(\frac{n-\ell b}{2\ell+1}\right)+b.$$ Hence
$n(2\ell+1)< 2(n-\ell b)+b(2\ell+1)=2n+b$ and
$n(2\ell-1)< b$, which is a contradiction. 

Now assume that each edge is oriented in at most one direction. A vertex $x$ of $T$ is a \emph{sink} if no edge incident with $x$ is oriented away from $x$. (Note that some edges incident with a sink might be unoriented.)\ Let $J$ be the subforest of $T$ obtained as follows: every sink is in $J$, and if $xy$ is an unoriented edge incident with a sink $x$, then $y$ and $xy$ are in $J$. Note that the vertex $y$ is also a sink and so every vertex in $J$ is a sink. Since $T$ is acyclic, $V(J)\neq\emptyset$. 

{\bf \noindent  Case 2.} $E(J)=\emptyset$: Thus $J$ contains an isolated vertex $y$. 
Let $x_1,\dots,x_d$ be the neighbours of $y$, where $d\leq 3$. Since $y$ is a sink and is isolated in $J$, each edge $x_iy$ is oriented $\overrightarrow{x_iy}$. Thus $|G(x_i,y)|< \frac{n-\ell b}{2\ell+1}$. Every vertex not in $\bigcup_i G(x_i,y)$ is in $B_y$. Thus $$n\leq b+\sum_i|G(x_i,y)|< b+3\left(\frac{n-\ell b}{2\ell+1}\right).$$ 
Thus $n(2\ell+1)< b(2\ell+1)+3(n-\ell b)=3n-\ell b + b$ and $0\leq n(2\ell-2)< b(1-\ell)\leq 0$, which is a contradiction.

{\bf \noindent Case 3.} $E(J)\neq\emptyset$: Let $x$ be a leaf vertex in $J$. Thus $x$ is a sink  and is incident with exactly one unoriented edge $xy$. Let $x_1,\dots,x_d$ be the other neighbours of $x$ in $T$, where $d\leq 2$. Thus $x_ix$ is oriented $\overrightarrow{x_ix}$. 
Let $T':=T(y,x)$ and $G':=G(y,x)$ and $n':=|G'|$. Then $(T',\{B_z\setminus(B_x\cap B_y):z\in V(T')\})$ is a tree-decomposition of $G'$. 
Since $x_ix$ is oriented $\overrightarrow{x_ix}$, 
$$n\leq |B_x|+ n' + \sum_i|G(x_i,x)| \leq b+n'+ 2\left(\frac{n-\ell b}{2\ell+1}\right).$$ 
It follows that 
\begin{align*}n' \geq \frac{(2\ell-1)n-b}{2\ell+1}\ge \frac{(2\ell-1)(3\ell+1)b-b}{2\ell+1}=(3(\ell-1)+1)b.\end{align*}
 By induction, there is a set $R'$ of $\ell-1$ edges of $T'$ such that for each component $Q'$ of $T'-R'$, 
\begin{align*}
|G'[Q']| \geq  \frac{n'-(\ell-1)b}{2\ell-1}.
\end{align*}

We now prove that $R:=R'\cup\{xy\}$ satisfies the lemma. By definition, $|R|=\ell$. Each component of $T-R$ is either $T(x,y)$ or is a component of $T'-R'$. Since $xy$ is unoriented, $|G(x,y)|\geq\frac{n-\ell b}{2\ell+1}$, as required. For each component $Q'$ of $T'-R'$,
\begin{align*}
|G[Q']|  \,=\, |G'[Q']| 
\,\geq\,  \frac{n'-(\ell-1)b}{2\ell-1}
\,\geq\,  \frac{n}{2\ell+1} - \frac{b}{(2\ell+1)(2\ell-1)}-\frac{(\ell-1)b}{2\ell-1}
\,=\,   \frac{n-\ell b}{2\ell+1},
\end{align*}
as required. Hence $R$ satisfies the lemma.
\end{proof}

\begin{thm} 
\label{thm:SeparatorSurface}
Let $\ell \in \mathbb{Z}^+$. Let $\Sigma$ be a surface with Euler genus $g$. Let $G$ be a triangulation of $\Sigma$ with radius $r$ and order $n\ge(3\ell+1)((3+2g)r+1)$. Then $G$ has a subgraph $S$ with at most $(2r+1)(g+\ell)$ edges, such that the induced embedding of $S$ in $\Sigma$ is 2-cell with $\ell+1$ faces, and each face of $S$ contains at least $$\frac{n-\ell (3+2g)r-\ell}{2\ell+1}$$ vertices of $G$ in its interior. 
\end{thm}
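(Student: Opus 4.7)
The plan is to apply Lemma~\ref{lem:TreeDecompSeparator} to a tree decomposition of $G$ built from a BFS tree together with a homology basis of $\Sigma$, and then convert the resulting tree-edge cut into the separator $S$. First I would fix a vertex $u$ of $G$ of eccentricity $r$ and a BFS spanning tree $T$ rooted at $u$, so every vertex has depth at most $r$ in $T$. Using standard topological graph theory, I would then select $g$ cycles $C_1,\dots,C_g$ through $u$ of length at most $2r+1$ such that $H := T \cup E(C_1) \cup \cdots \cup E(C_g)$ embeds 2-cellularly in $\Sigma$ with exactly one face---equivalently, the $C_j$ generate $H_1(\Sigma)$, so $H$ has precisely $g$ non-tree edges. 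Because $G$ is a triangulation, $G^*$ is 3-regular; the duals of $E(G) \setminus E(H)$ form a connected spanning subgraph of $G^*$ with $|E(G)|-(n-1)-g = f_G - 1$ edges (Euler's formula, noting $\Sigma \setminus H$ is a disk), hence a spanning tree $T^*$ of $G^*$ of maximum degree at most $3$.

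Next I would define a tree decomposition of $G$ indexed by $V(T^*)$: for each face $F$ with boundary vertices $v_1,v_2,v_3$, set
\[
B_F \;:=\; \{v_1,v_2,v_3\} \;\cup\; \bigcup_{i=1}^{3} P_T(v_i,u) \;\cup\; \bigcup_{j=1}^{g} V(C_j),
\]
where $P_T(v,u)$ is the $T$-path from $v$ to $u$. The three BFS ancestor paths share $u$ and contribute at most $3r+1$ vertices; the essential cycles, all through $u$, contribute at most $2gr$ further vertices; hence $|B_F| \le (3+2g)r+1 =: b$. After verifying the tree decomposition axioms---coverage and edge-coverage being immediate, and connectedness of the vertex-to-bag map requiring more care---the hypothesis $n \ge (3\ell+1)b$ matches that of Lemma~\ref{lem:TreeDecompSeparator}, which then yields $\ell$ edges $R \subseteq E(T^*)$ such that each of the $\ell+1$ components $Q$ of $T^*-R$ satisfies $|G[Q]| \ge (n-\ell b)/(2\ell+1)$.

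Each edge of $R$ is dual to a unique non-tree primal edge $e' \in E(G) \setminus E(H)$; let $C_{e'}$ be its fundamental cycle in $T$, of length at most $2r+1$. Setting
\[
S \;:=\; \bigcup_{j=1}^{g} C_j \;\cup\; \bigcup_{e^* \in R} C_{e'}
\]
gives $|E(S)| \le (g+\ell)(2r+1)$. By Euler's formula the intermediate subgraph $H \cup \{e' : e^* \in R\}$ has $n-1+g+\ell$ edges and $\ell+1$ faces; since each addition of an edge to a 2-cell face splits it into two 2-cells, all $\ell+1$ faces are 2-cells. Pruning tree branches not used by any cycle preserves this face structure (each pruning removes a degree-one vertex of the subgraph), so the induced embedding of $S$ in $\Sigma$ is 2-cell with $\ell+1$ faces. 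These faces are in bijection with the components of $T^*-R$, so each contains at least $(n-\ell b)/(2\ell+1) = (n-\ell(3+2g)r-\ell)/(2\ell+1)$ vertices of $G$ in its interior.

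The main obstacle is verifying the connectedness axiom of the tree decomposition for vertices $v$ not on any essential cycle. For such a vertex, $\{F : v \in B_F\}$ equals the set of faces incident to the subtree $T_v$ of $T$ rooted at $v$, and showing that this set induces a connected subtree of $T^*$ is a topological claim about how BFS subtrees sit inside $\Sigma$ relative to the chosen dual spanning tree---this is precisely what dictates the inclusion of the $V(C_j)$ in every bag. A secondary subtle point is the precise bag-size accounting that yields exactly $(3+2g)r+1$, which relies on choosing the $C_j$ to all pass through $u$ so that they share that vertex with the BFS paths.
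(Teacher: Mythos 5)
Your proof is correct and follows essentially the same route as the paper's: BFS tree $T$ rooted at a centre $u$, augment by $g$ non-tree edges $X$ so that $T\cup X$ has a single 2-cell face (the paper cites Biggs, Richter--Shank, \v{S}koviera for this), take the dual spanning tree $T^*$ with bags $B_f = V(T_x\cup T_y\cup T_z)\cup\bigcup_{pq\in X} V(T_p\cup T_q)$, invoke Lemma~\ref{lem:TreeDecompSeparator}, and turn the $\ell$ cut dual-edges into primal ``fundamental cycle'' subgraphs $Y_{vw}$. The step you flag as the main obstacle---verifying the connectedness (interpolation) axiom of this tree decomposition---is exactly the point the paper defers to Dujmovi\'c, Morin and Wood, so your instinct about where the nontrivial content lies is right. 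One small imprecision: you call the $g$ generators ``cycles through $u$ of length at most $2r+1$,'' but the fundamental cycle of a non-tree edge $pq$ in a BFS tree need not pass through $u$; what passes through $u$ and has at most $2r+1$ vertices is $T_p\cup T_q\cup\{pq\}$ (a cycle with possibly a pendant tail), and that is what the bag-size bound $(3+2g)r+1$ actually uses. This doesn't affect any of your estimates, just the wording.
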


\begin{proof}
Let $u$ be a centre of $G$. Let $T$ be a breadth-first spanning tree of $G$ rooted at $u$. Thus $\dist_T(u,v)=\dist_G(u,v)\leq r$ for each vertex $v$ of $G$. Let $T_v$ be the $uv$-path in $T$. 

Various authors \cite{Big71,RicSha84,Sko92} proved that there is a set $X$ of  exactly $g$ edges in $G-E(T)$ such that the induced embedding of $T\cup X$ in $\Sigma$ is 2-cell and has exactly one face. Let $F(G)$ be the set of faces of $G$. If $T^*$ is the graph with vertex set $F(G)$, where faces $f_1$ and $f_2$ of $G$ are adjacent in $T^*$ whenever $f_1$ and $f_2$ share an edge in $E(G)\setminus(E(T)\cup X)$, then $T^*$ is a tree with maximum degree at most 3. For each face $f=xyz$ of $G$, let  
$$B_f:=V(T_x\cup T_y\cup T_z)\,\cup\,\bigcup_{pq\in X}V(T_p\cup T_q).$$ Dujmovi\'c et al. \cite[Thm.~7]{DujMorWoo13} proved that $(T^*,\{B_f:f\in V(T^*)\})$ is a tree decomposition of $G$. Clearly, $T^*$ has maximum degree at most 3, and $|B_f|\leq (3+2g)r+1$ for each $f\in V(T^*)$ (since each $T_v$ has at most $r+1$ vertices, one of which is $u$). 

By Lemma~\ref{lem:TreeDecompSeparator} with $b=(3+2g)r+1$, there is a set $R$ of $\ell$ edges of $T^*$ such that 
$|G[Q]| \geq \frac{n- \ell (3+2g)r-\ell}{2\ell+1}$ for each of the $\ell+1$ components $Q$ of $T^*-R$.
Let $L$ be the set of edges $vw$ of $G$, such that for some edge $f_1f_2$ of $T^*$ in $R$, we have that $vw$ is the common edge on the faces $f_1$ and $f_2$ in $E(G)\setminus(E(T)\cup X)$. Thus $|L|=|R|=\ell$. 

For each edge $vw$ of $G-E(T)$, let $Y_{vw}:=T_v\cup T_w\bigcup\{vw\}$. Note that $Y_{vw}$ has at most $2r+1$ edges. Let $S:=\bigcup\{Y_{vw}:vw\in X\cup L\}$. Thus $S$ has at most $(2r+1)(g+\ell)$ edges. Starting from the 2-cell embedding of $T\cup X$ with one face, the addition of each edge in $L$ splits one face into two, giving $\ell+1$ faces in total. Thus $S$, which is obtained from $T\cup X\cup L$ by deleting pendant subtrees, also has $\ell+1$ faces, and is 2-cell embedded. 

The faces of $S$ are in 1--1 correspondence with the components of $T^*-R$. Let $\Phi$ be the face of $S$ corresponding to some component $Q$ of $T^*-R$.  Let $v$ be one of the at least $\frac{n- \ell (3+2g)r-\ell}{2\ell+1}$ vertices in $G[Q]$. 
If $v$ is not strictly in the interior of $\Phi$, then $v\in B_f$, where $f$ is a face of $G$ that is outside of $\Phi$ and incident with $v$, contradicting that $v$ is in $G[Q]$. 
Hence each face of $S$ contains at least $\frac{n- \ell (3+2g)r-\ell}{2\ell+1}$ vertices in its interior. 
\end{proof}

The case of planar graphs is worth particular mention, and is similar to a result by Tishchenko \cite[Cor.~33]{Tis2011a}.

\begin{cor} 
\label{cor:SeparatorPlanar}
Let $\ell \in \mathbb{Z}^+$. Let $G$ be a triangulation of the sphere with radius $r$ and order $n\ge(3\ell+1)(3r+1)$.  Then $G$ has a subgraph $S$ with at most $\ell(2r+1)$ edges, such that the induced embedding of $S$ is 2-cell with $\ell+1$ faces, and each face of $S$ contains at least $$\frac{n-(3r+1)\ell}{2\ell+1}$$ vertices of $G$ in its interior. 
\end{cor}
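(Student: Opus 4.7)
The plan is a direct specialization of Theorem~\ref{thm:SeparatorSurface} to the case $g=0$. First I would check that the hypotheses match: substituting $g=0$ into $n\ge (3\ell+1)((3+2g)r+1)$ yields exactly $n\ge(3\ell+1)(3r+1)$, as required. Next I would verify that the three conclusions collapse correctly: the edge bound $(2r+1)(g+\ell)$ becomes $\ell(2r+1)$; the per-face interior-vertex bound simplifies as
\[
\frac{n-\ell(3+2g)r-\ell}{2\ell+1}\;=\;\frac{n-3\ell r-\ell}{2\ell+1}\;=\;\frac{n-(3r+1)\ell}{2\ell+1};
\]
and the ``2-cell with $\ell+1$ faces'' assertion transfers verbatim, since every embedding in the sphere is 2-cell.

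Were one to write the proof out without invoking Theorem~\ref{thm:SeparatorSurface} as a black box, the simplification over the surface case is that the auxiliary edge set $X$ (whose role in the general proof was to kill the genus and turn a BFS tree into a one-face 2-cell subgraph) is empty when $g=0$. Thus a BFS spanning tree $T$ of $G$ rooted at a centre is by itself already 2-cell with a single face. The dual tree decomposition then has bags $B_f=V(T_x\cup T_y\cup T_z)$ for each triangular face $f=xyz$, each of size at most $3r+1$. Feeding $b=3r+1$ into Lemma~\ref{lem:TreeDecompSeparator} delivers a set $R$ of $\ell$ dual edges whose primal counterparts, together with the root-paths of their endpoints, form the required separator $S$.

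Because this is a genuine specialization, no real obstacle is expected. The only risk is an arithmetic slip in confirming that the bag-size bound $b=(3+2g)r+1$ becomes $3r+1$ at $g=0$, so that the hypothesis $n\ge(3\ell+1)b$ of Lemma~\ref{lem:TreeDecompSeparator} matches the hypothesis $n\ge(3\ell+1)(3r+1)$ of the corollary. Once that bookkeeping is in hand, the corollary follows with no further work.
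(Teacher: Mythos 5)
Your proposal is correct and matches the paper's intent exactly: Corollary~\ref{cor:SeparatorPlanar} is stated in the paper immediately after Theorem~\ref{thm:SeparatorSurface} as a direct specialization to $g=0$, and your substitutions (hypothesis $(3\ell+1)(3r+1)$, edge bound $\ell(2r+1)$, face bound $\frac{n-(3r+1)\ell}{2\ell+1}$) are all verified correctly. Your additional remark that the auxiliary edge set $X$ is empty in the planar case is an accurate and helpful observation, though not needed once the theorem is invoked as a black box.
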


\section{Proof of Theorem~\ref{theo:GralSurface}} 
\label{sec:surface-graphs}

We start the section with a well-known lemma.

\begin{lem}[Euler's formula, {\cite[pp.~95]{MohTho01}}]\label{lem:EulerFormula}
Let $G$ be a multigraph which is embedded in a surface $\Sigma$ of Euler genus $g$. Then
\[|V(G)|-|E(G)|+|F(G)| \ge 2-g,\]
where $V(G)$, $E(G)$, and $F(G)$ denote the set of vertices, edges, and faces of $G$, respectively. Equality is achieved when the multigraph  embeds 2-cellularly in $\Sigma$.
\end{lem}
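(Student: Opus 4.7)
The plan is to combine the Euler characteristic of $\Sigma$ with a face-wise decomposition of the surface induced by $G$. Since the statement is cited from a standard textbook, the paper itself presumably just invokes \cite{MohTho01}; the natural proof runs as follows.

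First, by the classification of surfaces, every surface of Euler genus $g$ has Euler characteristic $\chi(\Sigma) = 2-g$. If the embedding of $G$ is 2-cell, then $G$ together with its faces endows $\Sigma$ with a CW-complex structure whose $i$-cells are precisely the vertices ($i=0$), edges ($i=1$), and faces ($i=2$). The alternating-sum formula for the Euler characteristic of a CW-complex then immediately yields $|V(G)| - |E(G)| + |F(G)| = \chi(\Sigma) = 2-g$, which handles the equality case.

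For the general (possibly non-2-cell) case, I would establish the identity
\[
\chi(\Sigma) \;=\; |V(G)| - |E(G)| + \sum_{f \in F(G)} \chi(f),
\]
where each face $f$ is regarded as an open connected subsurface of $\Sigma$. This identity can be proved by iteratively refining the embedding to a 2-cell one: inside each non-disk face, add interior vertices and edges until the face is subdivided into open disks, and verify that each refinement step preserves both sides. Combined with the standard topological fact that every connected surface-with-boundary has Euler characteristic at most $1$, with equality exactly for the open disk, this yields $\sum_f \chi(f) \leq |F(G)|$, hence
\[
|V(G)| - |E(G)| + |F(G)| \;\geq\; \chi(\Sigma) \;=\; 2-g,
\]
with equality precisely when every face is a disk, i.e., when the embedding is 2-cell.

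The main obstacle is justifying the face-wise Euler characteristic identity; the refinement argument just sketched is the most elementary route, but one may alternatively appeal to the long exact sequence of the pair $(\Sigma, G)$ in singular homology, or simply quote the result directly from \cite{MohTho01} as the paper does.
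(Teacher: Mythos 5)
The paper gives no proof of this lemma; it is quoted verbatim from Mohar and Thomassen's textbook, as you anticipated. Your sketch is a correct account of the standard topological argument: equality in the 2-cell case follows from the CW-complex Euler characteristic, and the inequality in general follows because each face, being a proper connected open subsurface of $\Sigma$, satisfies $\chi(f)\le 1$ with equality only for an open disk. The one place that deserves a bit more care is the ``iterative refinement'' justification of the identity $\chi(\Sigma)=|V|-|E|+\sum_f\chi(f)$: when you add an interior vertex or an edge inside a face, both sides change, and one must check they change by the same amount in each of the cases (edge joining two boundary points that does or does not separate the face, edge joining a boundary point to a new interior vertex, etc.); this bookkeeping is routine but not trivial. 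Your alternative of invoking the long exact sequence of the pair $(\Sigma,G)$, or simply citing \cite{MohTho01} as the paper does, sidesteps that bookkeeping. In short: no gap, just a sketch that would need the refinement-step case analysis filled in to be fully self-contained.
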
  

Let  $S$ be a connected multigraph with minimum degree at least 2 and maximum degree at least 3 which is embedded in a surface $\Sigma$. We define a multigraph $H$ from $S$ as follows: if there is an edge $e$ with a degree-2 endvertex then contract $e$, and repeat until the minimum degree is at least $3$. The multigraph so constructed is called the {\it simplified configuration} of $S$ \cite{Tis2011}. During the edge contraction we do not allow a facial walk to vanish; that is, a facial walk can become a loop but not a point. Note that any two sequences of edge contractions result in isomorphic multigraphs and that $H$ could also be defined as the minimal multigraph such that $S$ is a subdivision of H. We call a vertex of $S$ or $H$ a {\it branch vertex} if it has degree at least three in $S$ or $H$, respectively; every vertex in $H$ is a branch vertex. Also, $H$ may have faces of length 1 (the loops) and faces of length 2, and it is connected.  See Fig.~\ref{fig:Goodregions} for an example. 

\begin{figure}[!ht]
\begin{center}
\includegraphics[scale=0.9]{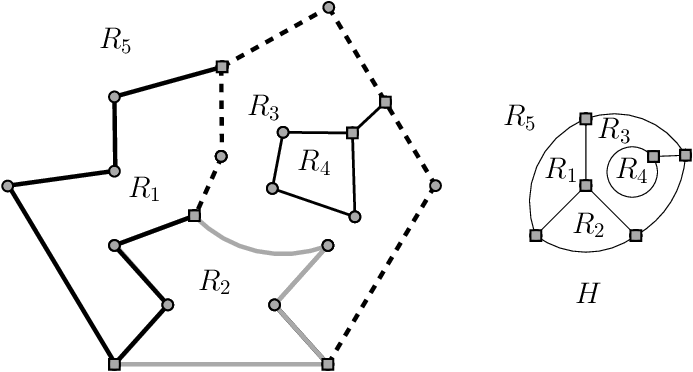}
\caption{($a$) A 5-separator in the plane.  ($b$) The associated simplified configuration $H$. Branch vertices are represented by a square.}
\label{fig:Goodregions}
\end{center}
\end{figure}

Our  Theorem~\ref{theo:GralSurface} follows from the following technical result. 

\begin{thm}\label{theo:mainSurface} Let $G$ be a graph embeddable in a surface with Euler genus at most $g$, maximum degree $\Delta\geq 3$, and diameter
$k\geq 2$. Then 
\[|V(G)|< (2\ell+1)c(\Delta-1)^{\left\lfloor{k}/{2}\right\rfloor}+(2\ell+1)(2k+1)(g+\ell)M+\ell(3+2g)k+\ell,\]
where $M=M(\Delta,\lfloor k/2\rfloor-1)$ denotes the corresponding Moore bound and \[(\ell,c):=\begin{cases}(\ceil{g^{2/3} + g^{1/2}}+6,2 g^{1/3}+ 6)&\text{if $k$ is even}\\
(\ceil{\sqrt{42g}}+33, 2\ell+2g-1)&\text{if $k$ is odd}.\end{cases}\]
\end{thm}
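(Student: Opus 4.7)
I would combine Theorem~\ref{thm:SeparatorSurface} with a per-face counting argument driven by the diameter hypothesis. First, embed $G$ in a surface $\Sigma$ of Euler genus at most $g$ and add edges to obtain a multigraph triangulation $G'$ of $\Sigma$ with $V(G')=V(G)$; since only edges are added, the radius of $G'$ is bounded by the radius of $G$, which in turn is at most the diameter $k$. If $n:=|V(G)|$ is so small that the hypothesis $n\ge(3\ell+1)((3+2g)k+1)$ of Theorem~\ref{thm:SeparatorSurface} fails, the claimed estimate already holds because the additive $\ell(3+2g)k+\ell$ term absorbs it; otherwise apply Theorem~\ref{thm:SeparatorSurface} to $G'$ with $r\le k$. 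This yields a connected subgraph $S\subseteq G'$ with at most $(2k+1)(g+\ell)$ edges, $2$-cellularly embedded in $\Sigma$ with $\ell+1$ faces, each containing at least
\[\frac{n-\ell(3+2g)k-\ell}{2\ell+1}\]
vertices of $G$ in its interior.

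The heart of the argument is a matching upper bound: at least one face $\Phi$ of $S$ satisfies $|V_\Phi|<c(\Delta-1)^{\lfloor k/2\rfloor}+(2k+1)(g+\ell)M$. Following Tishchenko's approach, I split the vertices of $G$ inside $\Phi$ into \emph{shallow} vertices, those at $G$-distance at most $\lfloor k/2\rfloor-1$ from $V(S)$, and \emph{deep} vertices (the rest). The shallow contribution is controlled by the union of BFS balls of radius $\lfloor k/2\rfloor-1$ centred at each vertex of $V(S)$: since $S$ is connected, $|V(S)|\le(2k+1)(g+\ell)+1$, and each such ball has at most $M=M(\Delta,\lfloor k/2\rfloor-1)$ vertices, so the shallow count is absorbed by the $(2k+1)(g+\ell)M$ term. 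The deep contribution carries the $c(\Delta-1)^{\lfloor k/2\rfloor}$ piece; the key observation is that any shortest path in $G$ between two deep vertices of $\Phi$ either stays within $\overline\Phi$ (because leaving the face forces passage through $V(S)$) or must pass through $V(S)$, which, since both endpoints are at $G$-distance at least $\lfloor k/2\rfloor$ from $V(S)$, already contributes a length of at least $2\lfloor k/2\rfloor$.

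To extract the bound $c(\Delta-1)^{\lfloor k/2\rfloor}$ from these constraints I would analyse the simplified configuration $H_\Phi$ of the boundary of~$\Phi$. Euler's formula (Lemma~\ref{lem:EulerFormula}) bounds the number of branch vertices and edges of $H_\Phi$ in terms of $g$ and $\ell$, while the facial walks of $S$ have total length at most $2|E(S)|\le 2(2k+1)(g+\ell)$, so for some face $\Phi$ the boundary complexity and the deep interior are simultaneously small. The diameter-$k$ hypothesis on $G$ then confines the deep interior of this face to a Moore-like ball of radius $\lfloor k/2\rfloor$ around a suitably chosen centre vertex, yielding at most $c(\Delta-1)^{\lfloor k/2\rfloor}$ deep vertices. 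The specific values of $(\ell,c)$ in the statement are chosen to balance the boundary-length and deep-interior contributions, with the different dependence on $g$ for even and odd $k$ reflecting that for even $k$ the Moore bound is tight at depth exactly $k/2$, whereas for odd $k$ an extra radial layer must be accommodated.

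Combining the lower bound on $|V_\Phi|$ from the separator theorem with this upper bound gives
\[\frac{n-\ell(3+2g)k-\ell}{2\ell+1}\le c(\Delta-1)^{\lfloor k/2\rfloor}+(2k+1)(g+\ell)M,\]
which on rearrangement is the claimed inequality. The main obstacle is the deep-interior bound: a naive application of the diameter condition only yields the Moore bound $M(\Delta,k)$, and reducing the exponent from $k$ to $\lfloor k/2\rfloor$ genuinely requires both the surface embedding and the carefully chosen separator. Tuning $(\ell,c)$ to make the counting inequality tight, separately for each parity of~$k$, is where the explicit parity-dependent expressions in the statement emerge.
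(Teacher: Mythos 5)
Your plan has a genuine gap at its core. You propose to show that at least one face $\Phi$ of the separator $S$ has its deep interior confined to a Moore-like ball of radius $\lfloor k/2\rfloor$ around a ``suitably chosen centre vertex,'' and you justify this by noting that a shortest path between two deep vertices of $\Phi$ either stays inside $\overline\Phi$ or passes through $V(S)$. But this observation gives no constraint on the number of deep vertices in $\Phi$: when the path stays inside $\overline\Phi$ there is nothing to exploit, and a set of diameter $\le k$ in a graph need not be contained in any ball of radius $\lfloor k/2\rfloor$ (a cycle $C_{2k+1}$ is already a counterexample). So the upper bound $|D_\Phi|\le c(\Delta-1)^{\lfloor k/2\rfloor}$ for a \emph{single} face simply does not follow from the diameter hypothesis by the route you describe, and in fact the paper proves the exact opposite: using the separator lower bound and the shallow-vertex count (the same split you propose), it deduces $|D_i|\ge c(\Delta-1)^{\lfloor k/2\rfloor}$ for \emph{every} face $R_i$.

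The actual proof is a proof by contradiction that crucially exploits the interaction between \emph{different} faces, not the geometry of a single face. Having established that all faces have many deep vertices, the paper classifies unordered pairs of faces as good (boundaries share an edge) or bad, bounds the number of good pairs via the simplified configuration $H$ of $S$ and Euler's formula, and so finds many bad pairs. For even $k$, a shortest path between deep vertices in a bad pair $R_i,R_j$ must pass through the intersection $I_{ij}=\partial(R_i)\cap\partial(R_j)$; a planarity/genus counting argument on the auxiliary multigraph $\Lambda$ bounds $|I_{ij}|$ for some pair, and the resulting bound on $|D_i|+|D_j|$ contradicts the lower bound just proved. For odd $k$, the path can instead use a ``jump edge'' between the two boundaries; the argument splits into a case with no jump edge (handled like the even case) and a case where the multigraph $Y$ built from $S$ plus jump edges has too many edges to embed in $\Sigma$. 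This cross-face structure, and the distinction between shared-boundary vertices (even $k$) versus jump edges (odd $k$), is precisely what produces the parity-dependent choices of $(\ell,c)$ and their dependence on $g$; none of that is visible in your single-face plan. To repair the argument you would need to abandon the per-face upper bound entirely and set up a pairwise incidence argument along these lines.
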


Note that  the assumed lower bounds on $\Delta$ in Theorem~\ref{theo:GralSurface} ensure that the secondary term $(2\ell+1)(2k+1)(g+\ell)M+\ell(3+2g)k+\ell$ in the upper bound on $N(\Delta,k,\mathcal{G}_\Sigma)$ in Theorem {\ref{theo:mainSurface} is not dominant. 

\begin{proof}[Proof of Theorem {\ref{theo:mainSurface}}]
 By \cite[Prop.~3.4.1, Prop.~3.4.2]{MohTho01}, we may
assume  that $G$ is 2-cell embedded in a surface $\Sigma$ of Euler genus $g$. Suppose for the sake of contradiction that \begin{equation}\label{eq:order}|V(G)|\ge (2\ell+1)c(\Delta-1)^{\left\lfloor{k}/{2}\right\rfloor}+(2\ell+1)(2k+1)(g+\ell)M+\ell(3+2g)k+\ell.\end{equation}

It follows that $|V(G)|\ge (3\ell+1)((3+2g)k+1)$. Thus, we may
apply Theorem~\ref{thm:SeparatorSurface} to a triangulation $G'$ of $G$. Note that $G'$ may be a multigraph. Let $S$ be a subgraph of $G'$ satisfying  Theorem~\ref{thm:SeparatorSurface}. Thus $|E(S)|\le (2k+1)(g+\ell)$, and the induced embedding of $S$ in $\Sigma$ has exactly $\ell+1$ faces $R_1,\ldots,R_{\ell+1}$ such that 
\begin{equation}\label{eq:TheoFunCycSep}
|V(G)\cap R_i|\ge \frac{|V(G)|}{2\ell+1}-\frac{\ell(3+2g)k+\ell}{2\ell+1},\; \text{for $i\in [1,\ell+1]$.}
\end{equation}

 For each face $R_i$ of $S$, let $\partial(R_i)$ be the subgraph of $S$ consisting of the vertices and edges embedded in the boundary of $R_i$. A vertex in $V(G)\cap R_i$ is {\it deep} if it is at distance at least $\lfloor k/2\rfloor$ in $G$ from $\partial(R_i)$.

The rest of the proof proceeds as follows. We first give a lower bound of $c(\Delta-1)^{\lfloor{k}/{2}\rfloor}$ for the number of deep vertices $D_i$ in each face $R_i$ of $S$. This implies that for every pair of distinct faces $R_i$ and $R_j$ of $S$ either $\partial(R_i)$ and $\partial(R_j)$ intersect or there exists an edge of $G$ with an endvertex in $\partial(R_i)$ and another endvertex on $\partial(R_j)$. Then we show that the embedding of $G$ restricts the number of pairs of faces of $S$ whose boundaries share an edge; these are our good pairs of faces. We bound the number of good pairs by a function linear in $\ell$. It follows that the number of pairs of faces of $S$ whose boundaries do not share an edge is quadratic in $\ell$; these are our bad pairs of faces. 

If the diameter is even  the set $I_{ij}$ of vertices in $\partial(R_i)\cap \partial(R_j)$ is nonempty for each bad  pair of regions $R_i$ and $R_j$. Furthermore, for any pair of deep vertices $x\in D_i$  and $y\in D_j$ every $xy$-path of length at most $k$ includes some vertex in $I_{ij}$. This allows us to provide an upper bound for the number of deep vertices in certain regions $R_i$ and $R_{j*}$. For our selection of $\ell$ and $c$ this upper bound turns out to be smaller than the aforementioned lower bound of $2c(\Delta-1)^{\lfloor{k}/{2}\rfloor}$ for $|D_i|+|D_{j*}|$, giving the desired contradiction.  In the case of odd diameter some $xy$-paths between deep vertices $x\in D_i$  and $y\in D_j$ may avoid $I_{ij}$, forcing the existence of edges between the boundaries of the bad pair of faces; these are our jump edges. The proof ends when we show that the necessary quadratic (in $\ell$) number of jump edges is inconsistent with a surface embedding. 

In the following we detail these ideas formally.

Let $V_i:=V(G)\cap R_i$ and let $D_i$ be the set of deep vertices in $R_i$. Since $\partial(R_i)$ has at most $(2k+1)(g+\ell)$ vertices and since the number of vertices at distance at most $\floor{k/2}-1$ from a given vertex is at most $M(\Delta,\floor{k/2}-1)$,
\[|V_i|\le (2k+1)(g+\ell)M+|D_i|.\] 
By  (\ref{eq:order}) and (\ref{eq:TheoFunCycSep}),
\begin{align*}|V_i|\ge c(\Delta-1)^{\left\lfloor {k}/{2}\right\rfloor}+(2k+1)(g+\ell)M+\frac{\ell(3+2g)k+\ell}{2\ell+1}-\frac{\ell(3+2g)k+\ell}{2\ell+1}.\end{align*}  
Thus, $c(\Delta-1)^{\left\lfloor k/2\right\rfloor}+(2k+1)(g+\ell)M\le (2k+1)(g+\ell)M+|D_i|$, implying
\begin{equation}
\label{eq:DeepVertSurf}  |D_i|\ge c(\Delta-1)^{\left\lfloor {k}/{2}\right\rfloor}.
\end{equation}

Let $H$ be the simplified configuration of $S$. Since $S$ is a connected multigraph with at least $3$ faces, $S$ has minimum degree at least 2 and maximum degree at least 3. The multigraph $H$ has minimum degree at least 3 and $\ell+1$ faces, and it may include faces of length 1 or 2. It is connected and embeds 2-cellularly in $\Sigma$. We use the multigraph $H$ to count the branch vertices of $S$. Since $3|V(H)|\le 2|E(H)|$, Lemma~\ref{lem:EulerFormula} gives
\begin{align}\label{eq:SurfaceH}
|V(H)|\le 2\ell+2g-2\; \text{and}\;|E(H)|&\le 3\ell+3g-3. 
\end{align}

Distinct faces $R_i$ and $R_j$ of $S$ are a {\it good} pair if their boundaries share an edge in $S$; otherwise they are a {\it bad} pair.

Since the number of good pairs of faces of $S$ is at most $|E(H)|$, the number of bad pairs of faces of $S$ is at least 
\begin{align}
\label{eq3} {\ell+1 \choose 2}-(3\ell+3g-3).
\end{align}

 Let $R_i$ and $R_j$ ($i\ne j$) be a bad pair of faces of $S$.  Let $I_{ij}$ be the set of vertices in $\partial(R_i)\cap \partial(R_j)$. 
 
We first prove the theorem for even $k$. Note that $I_{ij}\ne \emptyset$ for each bad pair of regions, since $D_i$ and $D_j$ are nonempty. For each $i$, let $\ell_i$ be the number of bad pairs in which $R_i$ is involved.  Choose $i$ so that $\ell_i$ is maximum, then $\ell_i\ge 2\frac{{\ell+1 \choose 2}-(3\ell+3g-3)}{\ell+1}=\frac{\ell^2-5\ell-6g+6}{\ell+1}\ge 1$, since $\ell=\ceil{g^{2/3} + g^{1/2}}+6$. For simplicity of notation, assume the faces $R_1,\ldots,R_{\ell_i}$ are involved in those pairs, and $i\not\in\{1,\ldots,\ell_i\}$. 
  
Let $\Lambda$ be the multigraph formed from $\cup_{j=1}^{\ell_i} \partial(R_j)\cup \partial(R_i)$ by contracting each edge not incident to two vertices of $\cup_{j=1}^{\ell_i} I_{ij}$; see Fig.~\ref{fig:IntSurf}. Thus $\Lambda$ has vertex set $\cup_{j=1}^{\ell_i} I_{ij}$ and edge set formed by the edges left after the contractions. 
\begin{figure}[!ht]
\begin{center}
\includegraphics[scale=1]{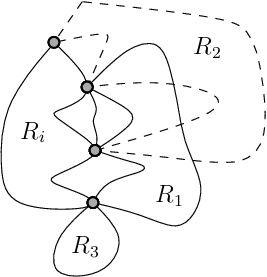}
\caption{A possible configuration for the multigraph $\Lambda$.}
\label{fig:IntSurf}
\end{center}
\end{figure}

Since $|F(\Lambda)|\le |F(S)|=\ell+1$ and since every vertex of $\Lambda$ has degree at least 4, Lemma~\ref{lem:EulerFormula} gives 
\begin{align}\label{eq:Iij-1}
|V(\Lambda)|=|\cup_{j=1}^{\ell_i} I_{ij}|\le \ell+g-1\quad \text{and} \quad|E(\Lambda)|\le 2(\ell+g-1). 
\end{align}

Each face $R_j$ ($j\in \{1,\ldots,\ell_i\}$) of $\Lambda$ has $|I_{ij}|$ vertices, and thus has $|I_{ij}|$ edges. Each such edge is in at most two such faces. Furthermore,  the face $R_i$ of $\Lambda$ has $|V(\Lambda)|$ edges and shares no edge with a face $R_j$ ($j\in \{1,\ldots,\ell_i\}$). Thus 
\begin{align}\label{eq:Iij-2}
2|E(\Lambda)| \geq 2|V(\Lambda)| +\sum_{j=1}^{\ell_i} |I_{ij}| \geq 2|V(\Lambda)|+\ell_i |I_{ij*}|,
\end{align}
where $I_{ij*}$  is a set $I_{ij}$ of minimum size.

Combining (\ref{eq:Iij-1}) and (\ref{eq:Iij-2}), 
\begin{align*}
|I_{ij*}|&\le\frac{4(\ell+g-1)-2|V(\Lambda)|}{\ell_i}\le \frac{4(\ell+g-1)}{\ell_i}-\frac{2|I_{ij*}|}{{\ell_i}} \quad\text{(since $|V(\Lambda)|\ge |I_{ij*}|$)},\\
|I_{ij*}|\left(1+\frac{2}{\ell_i}\right)&\le\frac{4(\ell+g-1)}{\ell_i},\\
|I_{ij*}|&\le\frac{4(\ell+g-1)}{\ell_i+2}\le \frac{4(\ell+g-1)(\ell+1)}{\ell^2-3\ell-6g+8}\quad\text{(since $\ell_i\ge \tfrac{\ell^2-5\ell-6g+6}{\ell+1}$).}\end{align*}

For $x\in D_i$ and $y\in D_{j*}$ every $xy$-path of length $k$ includes some vertex in $I_{ij*}$. Thus every vertex in $D_i\cup D_{j*}$  is at distance $k/2$ from $I_{ij*}$. Since the number of vertices at distance $t$ from a fixed vertex  is at most $(\Delta-1)^t$, by (\ref{eq:DeepVertSurf}), \[2c(\Delta-1)^{{k}/{2}}\le |D_i|+|D_{j*}|\le|I_{ij*}|(\Delta-1)^{{k}/{2}}\le \frac{4(\ell+g-1)(\ell+1)}{\ell^2-3\ell-6g+8}(\Delta-1)^{{k}/{2}},\]  
which is a contradiction for $\ell=\ceil{g^{2/3} + g^{1/2}}+6$ and $c=2 g^{1/3}+ 6$.

Now assume that $k$ is odd. Consider any two faces $R_i$ and $R_j$ of $S$, then an edge $xy$ in $G$ with $x\in \partial(R_i)-\partial(R_j)$ and $y\in \partial(R_j)-\partial(R_i)$ is called a {\it jump edge between} $R_i$ and $R_j$. We say that two jump edges are {\it equivalent} if they connect the same set of pairs of faces. 

\smallskip
\noindent {\bf Case 1:} There is no jump edge between some bad pair of faces $R_i$ and $R_j$. 

We follow the reasoning of the even case. Let $\Lambda$ be the multigraph formed from $\partial R_i\cup \partial R_j$ by contracting each edge not incident to two vertices of $I_{ij}$. Thus $\Lambda$ has vertex set $I_{ij}$ and edge set formed by the edges left after the contractions. Since $D_i\ne \emptyset$ and $D_j\ne \emptyset$ and since there is no jump edge between $R_i$ and $R_j$, we must have $I_{ij}\ne \emptyset$. It follows that $|F(\Lambda)|\le |F(S)|=\ell+1$ and that the minimum degree of $\Lambda$ is at least 4. Thus, by Lemma~\ref{lem:EulerFormula}, $|V(\Lambda)|\le \ell+g-1$.

For $x\in D_i$ and $y\in D_j$, since $\text{dist}(x, \partial(R_i))\ge \lfloor k/2\rfloor$ and $\text{dist}(y, \partial(R_j))\ge \lfloor k/2\rfloor$ and because there is no jump edge between $R_i$ and $R_j$, every $xy$-path of length at most $k$ includes some vertex in $I_{ij}$. If $\text{dist}(x, I_{ij})\ge \lfloor k/2\rfloor+1$ and $\text{dist}(y, I_{ij})\ge \lfloor k/2\rfloor+1$ for some $x\in D_i$ and $y\in D_j$, then $\text{dist}(x,y)\ge k+1$. Thus, without loss of generality, every vertex in $D_i$ is at distance exactly $\lfloor k/2\rfloor$ from $I_{ij}$. By (\ref{eq:DeepVertSurf}),
\[c(\Delta-1)^{\left\lfloor {k}/{2}\right\rfloor}\le|D_i|\le|I_{ij}|(\Delta-1)^{\left\lfloor {k}/{2}\right\rfloor}\le(\ell+g-1)(\Delta-1)^{\left\lfloor {k}/{2}\right\rfloor},\]
which is a contradiction since $c=2\ell+2g-1$. 

\smallskip
\noindent {\bf Case 2:} Now assume that between every bad pair of faces there is a jump edge. 

 A jump edge $xy$ is {\it normal} if neither $x$ nor $y$ is a branch vertex in $S$, otherwise it is {\it special}. Observe that a normal jump edge connects exactly one pair of regions. (This is not true for special jump edges.) 

Let $X$ be the multigraph consisting of $S$ plus the jump edges. The multigraph $X$ is connected and  may have more than $\ell+1$ faces. Now define a multigraph $Y$ obtained from $X$ by contracting an edge whenever it is not a jump edge and no endvertex is  a branch vertex of $S$. During the edge contraction we do not allow the facial walk of a face to vanish; that is, a facial walk may become a loop but not a point.  See Fig.~\ref{fig:EdgeCont}. Also, a set of jump edges running (in ``parallel'') between the same set of pairs of regions are replaced by a single edge. 

\begin{figure}[!ht]
\begin{center}
\includegraphics[scale=1]{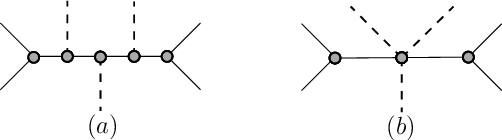}
\caption{Contraction of edges which are ends of jump edges. ($a$) various jump edge ends (in dashed lines). ($b$) The resulting edges in Y.}
\label{fig:EdgeCont}
\end{center}
\end{figure}

Observe that $Y$ can be obtained from a subdivision of $H$ by adding the jump edges, where each edge of $H$ is subdivided at most once. Thus \begin{align*}
|V(Y)|\le & |E(H)|+|V(H)|\le 5\ell+5g-5.
 \end{align*}

The multigraph $Y$ may have faces of length 1 or 2, and it is connected and of minimum degree at least 3. 
  
 Denote by $F_{1}(Y)$ and $F_{2}(Y)$ the set of faces of $Y$ of length 1 or 2, respectively. Then $|F_{1}(Y)|+|F_{2}(Y)|\le \ell+1$; this is the case because $Y$ has no multiple jump edges, and therefore, faces of length 1 and 2 can only arise from the initial faces of $H$. The handshaking lemma for faces gives $3(|F(Y)|-|F_1(Y)|-|F_2(Y)|)+|F_1(Y)|+2|F_2(Y)|\le 2|E(Y)|$. Thus, 
\begin{align*}
|F(Y)|&\le \tfrac{2}{3}|E(Y)|+\tfrac{1}{3}(2|F_{1}(Y)|+|F_{2}(Y)|)\le \tfrac{2}{3}E(Y)|+\tfrac{2}{3}(\ell+1).
\end{align*}
Consequently, Lemma~\ref{lem:EulerFormula} gives that
\begin{align*}
|E(Y)|\le 3|V(Y)|+2\ell-4+3g& \le 
 17\ell+18g-19.
 \end{align*}

The number of bad pairs  of faces of $S$ that are connected by normal jump edges equals the number of normal jump edges, and hence, is at most $|E(Y)|$.
Since the number of bad pairs of faces of $S$ is at least ${\ell+1 \choose 2}-(3\ell+3g-3)$ and since $\ell=\ceil{\sqrt{42g}}+33$, the number of bad pairs of faces of $S$ that are not joined  by a normal jump edge is at least \begin{equation}\label{eq5}{\ell+1\choose 2}-(3\ell+3g-3)-(17\ell+18g-19)\ge 1.\end{equation}
Hence, there is at least one bad pair of faces $R_i$ and $R_j$ of $S$ that is not joined by a normal jump edge.
 
Recall the number of branch vertices in $S$ equals the number of vertices of $H$, which is at most $2\ell+2g-2$. Thus, the number of deep vertices in each of $D_i$ and $D_j$ at distance $
\lfloor k/2\rfloor$ from a branch vertex in $\partial(R_i)\cup \partial(R_j)$ is at most 
\[(2\ell+2g-2)(\Delta-1)^{\left\lfloor {k}/{2}\right\rfloor}.\]   

By Equation (\ref{eq:DeepVertSurf}), $|D_i|\ge c(\Delta-1)^{\floor{k/2}}$ and $|D_j|\ge c(\Delta-1)^{\floor{k/2}}$. Since $c>2\ell+2g-2$ there are vertices $\beta_i$ and $\beta_j$ in $D_i$ and $D_j$ respectively at distance at least $\lfloor k/2\rfloor+1$ from each branch vertex of $H$. Thus a shortest path of length $k$ between $\beta_i$ and $\beta_j$ must use a normal jump edge between $R_i$ and $R_j$. This is a contradiction and completes the proof of the theorem.
\end{proof}

\section{Concluding remarks} 
\label{sec:conclusion}

We believe the asymptotic value of $N(\Delta,k,\mathcal{G}_\Sigma)$ is closer to the lower bound in 
Equation~\ref{LowerBound}
than to the upper bound in Theorem~\ref{theo:GralSurface}.

\begin{conjecture}\label{conj1} 

There exist a constant $c$ and a function $\Delta_0:=\Delta_0(g,k)$ such that, for $\Delta\ge \Delta_0$, 
\[N(\Delta,k,\mathcal{G}_\Sigma)\le\begin{cases}
c(\Delta-1)^{\lfloor {k}/{2}\rfloor}& \text{if $k$ is even}\\
c(\sqrt{g}+1)(\Delta-1)^{\lfloor {k}/{2}\rfloor}& \text{if $k$ is odd.}
\end{cases}\]
\end{conjecture}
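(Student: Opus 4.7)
The plan is to strengthen both the separator theorem and the downstream counting argument in Theorem~\ref{theo:mainSurface}, since the conjecture asks to reduce the $g$-dependence from $g+1$ to $O(1)$ for even $k$ and from $g^{3/2}+1$ to $\sqrt{g}+1$ for odd $k$. For the even case, I would first try to apply Theorem~\ref{thm:SeparatorSurface} with $\ell$ an absolute constant (say $\ell=3$), so that the separator still produces a constant number of large faces, each of order $\Omega(n)$. For $n$ just exceeding the target $c(\Delta-1)^{k/2}$, pigeonhole forces $\Omega((\Delta-1)^{k/2})$ deep vertices in some face, and the task reduces to showing that any two face boundaries can be made to share only $O(1)$ vertices, contradicting the lower bound on $|D_i|$ with $c$ an absolute constant.

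The key technical ingredient required is an absolute (in $g$) bound on $|I_{ij}|$. I would try to obtain this by a local Moore-type argument within each face: if $\partial R_i$ has bounded complexity in its simplified configuration $H$, then the number of vertices of $G$ at distance $t$ from $\partial R_i$ grows at most like $|\partial R_i|\cdot(\Delta-1)^{t-1}$ in a tree-like manner, with any $O(g)$ ``shortcut edges'' providing only lower-order corrections. Summing over $t\in\{1,\dots,k/2\}$ would bound $|R_i|$ in terms of $|\partial R_i|$ alone, and the boundary cost $|\partial R_i|$ can be amortised against the $\ell+1$ faces, giving a prefactor independent of $g$.

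For odd $k$, the lower-bound construction provides a template: the extremal graph should consist of a small ``hub'' of size $O(\sqrt g)$ together with $O(\sqrt g)$ tree-like appendages of depth $(k-1)/2$. I would aim to show that, inside any extremal graph, one can identify a dense subgraph on $p=\Theta(\sqrt g)$ vertices whose deletion leaves a graph of Moore-bounded order, thus directly mirroring the construction underlying~\eqref{LowerBound}. Euler's formula forces $p\leq\sqrt{6g+O(1)}$, yielding the target $\sqrt g$ prefactor, provided the tree-like appendages can be shown to be genuinely tree-like away from the hub.

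The principal obstacle is that the separator-based approach of Theorem~\ref{theo:mainSurface} intrinsically loses factors of $g$: the $g$ genus-closing edges added to make the BFS tree $2$-cell force each bag of the tree decomposition, and hence each boundary $\partial R_i$, to have size $\Omega(g)$, so that $|I_{ij}|=\Omega(g)$ in the worst case. Removing these $g$ factors seems to require a genuinely new ingredient—noose separators (short closed curves on $\Sigma$ rather than subgraphs), induction on $g$ via cutting along a shortest non-contractible cycle, or the decomposition of bounded-genus graphs into planar pieces attached at a bounded ``apex'' set. I expect even the even-$k$ case of the conjecture to require such a structural input beyond the separator method used here, which is why the authors state the bound only as a belief.
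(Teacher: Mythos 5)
The statement you were asked to prove is labelled a \emph{conjecture} in the paper: the authors explicitly present it as an open problem and offer no proof. There is therefore no ``paper proof'' to compare your attempt against, and your write-up correctly treats it as such. Your final paragraph --- where you observe that the $g$ genus-closing edges force each bag in the tree decomposition, and hence each face boundary $\partial R_i$, to have size $\Omega(gr)$, so the separator method of Theorem~\ref{thm:SeparatorSurface} inherently loses factors of $g$ --- is an accurate diagnosis of exactly why the authors stopped at $c(g+1)$ and $c(g^{3/2}+1)$ in Theorem~\ref{theo:GralSurface} rather than proving the conjectured bounds. Your honest conclusion that ``the authors state the bound only as a belief'' is the right one.

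Regarding the sketch itself: the even-$k$ plan (take $\ell$ an absolute constant and bound $|I_{ij}|$ by $O(1)$) cannot work directly, for the reason you yourself identify two paragraphs later --- once $g$ fundamental-cycle edges are threaded through every bag, the intersections $I_{ij}$ are $\Omega(g)$ in the worst case, so no purely local Moore-type argument will remove the $g$-dependence. The ``$O(g)$ shortcut edges providing only lower-order corrections'' step is the gap: those edges sit on every face boundary, not in the interior of a single face, so their contribution is not lower order. For the odd case, the idea of locating a $\Theta(\sqrt g)$-vertex dense hub mirroring the construction in~\eqref{LowerBound} is a reasonable heuristic, but no mechanism is offered for extracting such a hub from an arbitrary extremal graph; Euler's formula bounds the clique number, not the structure of the whole graph. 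Your proposed alternative tools (noose separators, induction on genus by cutting along short non-contractible cycles, bounded-apex planar decompositions) are plausible directions and consistent with what would likely be required, but they remain a research programme rather than a proof --- which, again, matches the status of the statement in the paper.
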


A generalisation to the class $\mathcal{G}_H$ of $H$-minor-free graphs, with $H$ a fixed graph, was studied in {\cite{PVW12}}. The current best upper bound of $$N(\Delta,k,\mathcal{G}_H)\le 4k(c|H|\sqrt{\log{|H|}})^k\Delta^{\lfloor k/2\rfloor}$$ was given in {\cite[Sec.~4]{PVW12}}. Note that if $H$ is planar, then $\mathcal{G}_H$ has bounded treewidth, and thus has small order {\cite[Thm.~12]{PVW12}}.

\section*{Acknowledgments}

We would like to thank Roi Krakovski for discussions that led to improvements in the paper presentation. 

Research of Nevo was partially supported by the Marie Curie grant IRG-270923 and the ISF grant 805/11. Research of Pineda-Villavicencio was supported by a postdoctoral fellowship funded by the Skirball Foundation via the Center for Advanced Studies in Mathematics at Ben-Gurion University of the Negev, and by an ISF grant. Research of Wood is supported by the Australian Research Council.

\providecommand{\bysame}{\leavevmode\hbox to3em{\hrulefill}\thinspace}
\providecommand{\MR}{\relax\ifhmode\unskip\space\fi MR }
\providecommand{\MRhref}[2]{%
  \href{http://www.ams.org/mathscinet-getitem?mr=#1}{#2}
}
\providecommand{\href}[2]{#2}
\bibliographystyle{plainnat}

\end{document}